\newcommand{\R}{I\kern-0.37emR}
\newcommand{\boldgreek}[1]{\mbox{\boldmath$#1$}}
\newcommand{\ny}{n\rightarrow\infty}
\newcommand{\Q}{I\kern-0.37emP}
\newcommand{\E}{I\kern-0.37emE}
\def\bgk{\boldgreek}
\def\bbe{\bgk \beta}
\newtheorem{theorem}{Theorem}
\theoremstyle{plain}
\newtheorem{remark}{Remark}
\numberwithin{equation}{section}
\begin{document}
\title[Empirical regression quantile process]{Empirical regression quantile process with possible application to risk analysis}
\author{Jana Jure\v{c}kov\'a}
\address[J. Jure\v{c}kov\'a]
{Department of Probability and Statistics, Charles University, \newline%
\indent Faculty of Mathematics and Physics, Prague, Czech Republic}%
\email{jurecko@karlin.mff.cuni.cz}%
\urladdr{http://www.karlin.mff.cuni.cz/~jurecko}
\author{Martin Schindler}
\address[M. Schindler]{Department of Applied Mathematics, Technical University\newline%
\indent Liberec, Czech Republic}%
\email{martin.schindler@tul.cz}%
\author{Jan Picek}
\address[J. Picek]{Department of Applied Mathematics, Technical University\newline%
\indent Liberec, Czech Republic}
\email{jan.picek@tul.cz}%
\thanks{The authors gratefully acknowledge the support of the Grant GA\v{C}R 15-00243S}
\date{}
\subjclass{Primary 62J02 , 62G30; Secondary 90C05, 65K05, 49M29, 91B30 } %
\keywords{Averaged regression quantile, one-step regression quantile,  R-estimator,  risk measurement}%

\begin{abstract}
The processes of the averaged regression quantiles and of their modifications provide useful tools in the regression models when the covariates are not fully under our control. As an application we mention the probabilistic risk assessment in the situation when the  return depends on some exogenous variables. 
The  processes enable to evaluate the expected $\alpha$-shortfall ($0\leq\alpha\leq 1$) and other measures of the risk, recently generally accepted in the financial literature, but also help to measure the risk in environment ana\-ly\-sis and elsewhere. 
\end{abstract}
\maketitle

\section{Introduction}

\noindent 
In everyday life and  practice  we encounter various risks,  depending on various contributors. The risk contributors may be partially under our control, and information on them is important, because it helps to make good decisions about system design. This problem appears not only in the financial market, insurance and social statistics, but also in environment ana\-ly\-sis dealing with exposures to toxic chemicals (coming from power plants, road vehicles,
agriculture), and elsewhere; see  \cite{Molak} for an excellent  review of  such problems. 
Our aim is to analyze the risks with the aid of probabilistic risk assessment.
In the literature were recently defined various  coherent risk measures, some 
satisfying suitable axioms.     We refer to \cite{Artzner1997}, \cite{Artzner1999},  \cite{Pflug2000}, \cite{Tasche}, \cite{Uryasev2000}, {\cite{RockUryasev2001}, \cite{AcerbiTasche2002}, \cite{Trindade}, \cite{CaiWang}, \cite{RockafellarUryasev2013}, \cite{RockMiranda}, and to other papers cited in,  for discussions and some projects. For possible applications in the insurance we refer to \cite{Chan}. 

A generally accepted  measure of the risk is the expected shortfall, based on quantiles of a portfolio return.  Its properties were recently intensively studied. Acerbi and Tasche in \cite{AcerbiTasche2002} speak on "expected loss in the 100$\alpha$\% worst cases", or shortly on "expected $\alpha$-shortfall", $0<\alpha<1,$ which is defined as
\begin{equation}\label{shortfall}
-\E\{Y|Y\leq F^{-1}(\alpha)\}=-\frac{1}{\alpha}\int_0^{\alpha}F^{-1}(u)du,
\end{equation}
where $F$ is the distribution function of the asset $Y.$ The quantity can be estimated by means of approximations of the quantile function $F^{-1}(u)$ by  the sample quantiles. 

The quantile regression is an important method for investigation of the risk of an assett in the situation that it depends on some exogenous variables. An averaged regression quantile, introduced in \cite{JP2014}, or some of its modifications, serve as a convenient tool for the global risk measurement in such a situation, when the amount of covariates is not under our control. 
The typical model for the relation of the loss to the covariates is  the regression model
\begin{equation}
\label{1}
Y_{ni}=\beta_0+{\bf x}_{ni}^{\top}{\boldgreek\beta}+e_{ni}, \quad
i=1,\ldots,n
\end{equation}
where $Y_{n1},\ldots,Y_{nn}$ are observed responses, $e_{n1},\ldots,e_{nn}$ are 
independent model errors,  possibly 
non-identically distributed with unknown distribution functions $F_i, \; i=1,\ldots,n.$
The covariates ${\bf x}_{ni}=(x_{i1},\ldots,x_{ip})^{\top}, \; i=1,\ldots,n$ are random or nonrandom,
and
${\boldgreek\beta}^*=(\beta_0,\boldgreek\beta^{\top})^{\top}=(\beta_0,\beta_1,\ldots,\beta_p)^{\top}\in\R^{p+1}$ 
is an unknown parameter. For the sake of brevity, we also use the notation
 $\mathbf x^*_{ni}=(1,x_{i1},\ldots,x_{ip})^{\top}, \; i=1,\ldots,n.$
  
An important tool in the risk analysis is the regression $\alpha$-quantile  $$\widehat{\boldgreek\beta}^*_n(\alpha)=\left(\hat{\beta}_{n0}(\alpha),(\widehat{\boldgreek\beta}_n(\alpha))^{\top}\right)^{\top}=
\left(\hat{\beta}_{n0}(\alpha),\hat{\beta}_{n1}(\alpha),\ldots,\hat{\beta}_{np}(\alpha)\right)^{\top}.$$
It is a $(p+1)$-dimensional vector defined as a minimizer
\begin{eqnarray}\label{RQ}
&&\widehat{\boldgreek\beta}^*_n(\alpha)=\arg\min_{\mathbf b\in{\R}^{p+1}}\Big\{\sum_{i=1}^n\Big[\alpha(Y_i-\mathbf x_i^{*\top}\mathbf b)^++(1-\alpha)(Y_i-\mathbf x_i^{*\top}\mathbf b)^-\Big]\Big\}\nonumber\\
&& \; \mbox{where } \; z^+=\max(z,0) \; \mbox{ and} \; z^-=\max(-z,0), \; z\in\R_1. 
\end{eqnarray} 
The solution $\widehat{\bbe}_n^*(\alpha)=(\hat{\beta}_0(\alpha), \widehat{\bbe}(\alpha))^{\top}$ minimizes the $(\alpha, 1-\alpha)$ convex combination of residuals $(Y_i-\mathbf x_i^{*\top}\mathbf b)$ over $\mathbf b\in \R^{p+1},$ where the choice of $\alpha$ depends on the balance between underestimating and overestimating the respective losses $Y_i.$ The increasing $\alpha\nearrow 1$  reflects a greater concern about underestimating losses Y, comparing to overestimating.

The methodology is based on the \textit{averaged regression} $\alpha$-\textit{quantile}, what is the following
 weighted mean of components of $\widehat{\boldgreek\beta}_n^*(\alpha), \; 0\leq\alpha\leq 1$: 
\begin{equation}\label{22x}
\bar{B}_n(\alpha)=\overline{\mathbf x}_n^{*\top}\widehat{\boldgreek\beta}_n^*(\alpha)=\widehat{\beta}_{n0}(\alpha)+\frac 1n\sum_{i=1}^n\sum_{j=1}^p x_{ij}\widehat{\beta}_j(\alpha), \quad \overline{\mathbf x}_n^{*}=\frac 1n\sum_{i=1}^n\mathbf x_i^* 
\end{equation}
	In \cite{JP2014} it was shown that $\bar{B}_n(\alpha)-\beta_0-\bar{\mathbf x}_n^{\top}\bbe$ is asymptotically equivalent to the $[n\alpha]$-quantile $e_{n:[n\alpha]}$ of the model errors, if they are identically distributed. Hence, $\bar{B}_n(\cdot)$ can help to make an inference on the expected $\alpha$-shortfall (\ref{shortfall}) even under the nuisance regression.

Besides $\bar{B}_n(\alpha)$, its various modifications can also be used, sometimes better comprehensible. The methods are nonparametric, thus applicable also to heavy-tailed  and skewed distribution; notice that \cite{Braione} speak about considerable improvement over normality, trying to use different distributions. An extension to autoregressive models is possible and will be a subject of the further study; there the main tool will be the autoregression quantiles, introduced in \cite{KoulSaleh1995}, and their averaged versions. The autoregression quantile will reflect the value-at-risk, based on the past assets, while its averaged version will try to mask the past history.

The behavior of $\bar{B}_n(\alpha)$ with $0<\alpha<1$ has been illustrated in \cite{Bassett1988} and \cite{KB1982}, and summarized in \cite{Koenker2005}; here it is showed that
$\bar{B}_n(\alpha)$ is nondecreasing step function of $\alpha\in(0,1).$ 
The extreme $\bar{B}_n(1)$ with $\alpha=1$ was studied in \cite{Jur2016}.
  Notice that the upper bound of the number $J_n$ of breakpoints of $\widehat{\bbe}_n^*(\cdot)$ and also of $\bar{B}_n(\cdot)$ is  $\left(\begin{array}{c} n\\ p+1\\ \end{array}\right)=\mathcal O\left(n^{p+1}\right).$ However, Portnoy in \cite{Portnoy1991} showed that, under some condition on the design matrix $\mathbf X_n,$ the number $J_n$ of breakpoints is of order $\mathcal O_p(n \; \log n)$ as $\ny,$ and thus much smaller. 
  
   An alternative to the regression quantile is the  \textit{two-step regression $\alpha$-quantile}, introduced in \cite{JP2005}. Here the slope components $\bbe$  are estimated  by a specific rank-estimate $\tilde{\bbe}_{nR},$ which is invariant to the shift in location. The intercept component is then estimated by the $\alpha$-quantile of residuals of $Y_i$'s from $\tilde{\bbe}_{nR}.$ The averaged two-step regression quantile $\tilde{B}_n(\alpha)$ is asymptotically equivalent to  $\bar{B}_n(\alpha)$ under a wide choice of the R-estimators of the slopes. However,  finite-sample behavior of $\tilde{B}_n(\alpha)$ generally differs from that of $\bar{B}_n(\alpha);$ is affected by the choice of R-estimator, but the main difference is that the number of breakpoints of $\tilde{B}_n(\alpha)$ exactly equals to $n$.   
   
Being aware of various important applications of the problem, we shall study this situation in more detail. The averaged regression quantile $\bar{B}_n(\alpha)$ is monotone in $\alpha,$ while the two-step averaged regression quantile $\tilde{B}_n(\alpha)$ can be made motonone by a suitable choice of R-estimate $\tilde{\bbe}_{nR}.$  Hence, we can consider their inversions, which in turn estimate the parent distribution $F$ of the model errors. As such they both provide a tool for an inference. The behavior of these processes and of their approximations is analyzed and numerically illustrated. 
\section{Behavior of $\bar{B}_n(\alpha)$ over $\alpha\in(0,1).$} 
Let us  first describe one possible form of the averaged regression quantile $\bar{B}_n(\alpha)$ as a weighted mean of the basic components of vector $\mathbf Y.$ 
Consider again the minimization (\ref{RQ}), fixed $\alpha\in [0,1]$ fixed. This was treated in \cite{KB1978} as a special linear programming problem, and later on various modifications of this algorithm were developed. Its dual program 
is a parametric linear program, which can be written simply as
\begin{eqnarray}
\label{linprog}
&\mbox{ maximize \ } & {\bf Y}_{n}^{\top}\hat{{\bf a}}(\alpha)
\nonumber\\
&\mbox{under} & \mathbf X_n^{*\top}\hat{\mathbf a}(\alpha)=(1-\alpha)\mathbf X_n^{*\top}\mathbf 1_n^{\top}\\             
&& \hat{{\mathbf a}}(\alpha) \in [0,1]^n, \ 0\leq\alpha\leq 1\nonumber
\end{eqnarray}
where
\begin{equation}\label{matrix}
 \mathbf X_n^*=\left[\begin{array}{l}  \mathbf x_{n1}^{*\top}\\  \ldots\\  \mathbf x_{nn}^{*\top}\end{array}\right] \quad \mbox{ is of order } n\times(p+1).
\end{equation} 
The components of the optimal solution $\hat{\bf a}(\alpha)=(\hat{a}_{n1}(\alpha),\ldots,\hat{a}_{nn}(\alpha))^{\top}$ of
(\ref{linprog}), called 
regression rank scores, were studied in\cite{GJ92}, who showed that $\hat{a}_{ni}(\alpha)$ is a continuous, piecewise linear
function of $\alpha\in[0,1]$ and $\hat{a}_{ni}(0)=1, \; \hat{a}_{ni}(1)=0, \; i=1,\ldots,n.$ Moreover, $\hat{\bf a}(\alpha)$ is invariant in the sense that it does not change if $\mathbf Y$ is replaced with $\mathbf Y+\mathbf X_n^*\mathbf b^*, \; \forall \mathbf b^*\in\R^{p+1}$ (see \cite{GJ92} for detail).

Let $\{\mathbf x_{i_1}^*,\ldots,\mathbf x_{i_{p+1}}^*\}$ be the optimal base in (\ref{linprog}) and let $\{Y_{i_1},\ldots, Y_{i_{p+1}}\}$ be the corresponding responses in model (\ref{1}).
Then $\bar{B}_n(\alpha)$  
equals to a weighted mean of $\{Y_{i_1},\ldots, Y_{i_{p+1}}\}$, with the weights based on the regressors. Indeed, we have a theorem
\begin{theorem}
\label{Theorem1}
Assume that the regression matrix (\ref{matrix})
has full rank $p+1$ and that the distribution functions $F_1,\ldots,F_n$ of model errors are continuous and increasing in $(-\infty,\infty).$ Then with probability 1
\begin{eqnarray}\label{main0}
&&\bar{B}_n(\alpha)=\sum_{k=1}^{p+1} w_{k,\alpha}Y_{i_k}, \quad 
\sum_{k=1}^{p+1} w_{k,\alpha}=1\\[2mm]
and \quad &&\bar{B}_n(\alpha)\leq \bar{B}_n(1)<\max_{i\leq n}Y_i\label{main01}
\end{eqnarray}
where 
the vector $\mathbf Y_n(1)=(Y_{i_1},\ldots,Y_{i_{p+1}})^{\top}$ 
corresponds to the optimal base of the linear program (\ref{linprog}). 

The vector $\mathbf w_{\alpha}=(w_{1,\alpha},\ldots,w_{p+1,\alpha})^{\top}$ of coefficients equals to
\begin{equation}\label{w}
\mathbf w_{\alpha}=\left[n^{-1}\mathbf 1_n^{\top}\mathbf X_n^*(\mathbf X_{n1}^*)^{-1}\right]^{\top}, \; \mbox{ while } \;  \sum_{k=1}^{p+1}w_{k,\alpha}=1
\end{equation}
where $\mathbf X_{n1}^*$ is the submatrix of $\mathbf X_n^*$ with the rows $\mathbf x_{i_1}^{*\top},\ldots,\mathbf x_{i_{p+1}}^{*\top}.$
\end{theorem}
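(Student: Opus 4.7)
My plan is to exploit the duality between (\ref{RQ}) and the parametric LP (\ref{linprog}): the key observation is that at the optimal base the regression quantile hyperplane must interpolate the corresponding $p+1$ responses, and everything else reduces to linear algebra.

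First I would invoke the standard LP theory of Koenker--Bassett \cite{KB1978} together with the regression-rank-scores analysis of \cite{GJ92}: at each fixed $\alpha\in(0,1)$ the dual LP has a basic optimal solution supported on $p+1$ active indices $\{i_1,\dots,i_{p+1}\}$, and by complementary slackness the primal fit passes through these points, i.e.\ $Y_{i_k}=\mathbf{x}_{i_k}^{*\top}\widehat{\bbe}_n^*(\alpha)$ for $k=1,\dots,p+1$. Continuity and strict monotonicity of $F_1,\dots,F_n$ ensure that, with probability one, ties do not occur and the base consists of exactly $p+1$ indices. Since the vectors $\mathbf{x}_{i_k}^*$ are linearly independent (they form a basis of $\R^{p+1}$) and the full matrix $\mathbf{X}_n^*$ has rank $p+1$, the submatrix $\mathbf{X}_{n1}^*$ is invertible, and the interpolation equations solve to $\widehat{\bbe}_n^*(\alpha)=(\mathbf{X}_{n1}^*)^{-1}\mathbf{Y}_n(1)$. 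Substituting into (\ref{22x}) yields
\[
\bar{B}_n(\alpha)=\overline{\mathbf{x}}_n^{*\top}\widehat{\bbe}_n^*(\alpha)=n^{-1}\mathbf{1}_n^{\top}\mathbf{X}_n^*(\mathbf{X}_{n1}^*)^{-1}\mathbf{Y}_n(1)=\mathbf{w}_\alpha^{\top}\mathbf{Y}_n(1),
\]
which is exactly (\ref{main0}) with the weight vector (\ref{w}).

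Next I would verify $\sum_{k=1}^{p+1}w_{k,\alpha}=1$ by a short direct computation. Because the intercept column of $\mathbf{X}_n^*$ is $\mathbf{1}_n$, the first column of $\mathbf{X}_{n1}^*$ is $\mathbf{1}_{p+1}$, so $\mathbf{X}_{n1}^*\mathbf{e}_1=\mathbf{1}_{p+1}$ and consequently $(\mathbf{X}_{n1}^*)^{-1}\mathbf{1}_{p+1}=\mathbf{e}_1$. Therefore
\[
\mathbf{1}_{p+1}^{\top}\mathbf{w}_\alpha=n^{-1}\mathbf{1}_n^{\top}\mathbf{X}_n^*(\mathbf{X}_{n1}^*)^{-1}\mathbf{1}_{p+1}=n^{-1}\mathbf{1}_n^{\top}\mathbf{X}_n^*\mathbf{e}_1=n^{-1}\mathbf{1}_n^{\top}\mathbf{1}_n=1.
\]
For the inequality chain (\ref{main01}), monotonicity of the step function $\alpha\mapsto\bar{B}_n(\alpha)$, giving $\bar{B}_n(\alpha)\leq\bar{B}_n(1)$, is classical; it is summarised in \cite{Koenker2005} and goes back to \cite{Bassett1988} and \cite{KB1982}. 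The strict bound $\bar{B}_n(1)<\max_{i\leq n}Y_i$ is precisely the extreme-quantile result of \cite{Jur2016}, where it is shown that under continuous error distributions the averaged regression quantile at $\alpha=1$ is, with probability one, strictly smaller than the sample maximum.

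The main obstacle is the LP-theoretic step — pinning down that the optimal base supports exactly $p+1$ interpolated responses so that the representation (\ref{main0}) is unambiguous. For classical simplex-type LPs this is routine, but here one must combine the non-degeneracy coming from $\mathrm{rank}(\mathbf{X}_n^*)=p+1$ with the continuity of the $F_i$'s in order to exclude, almost surely, degenerate bases with fewer than $p+1$ active residuals or with coincident values of $\mathbf{x}_{i_k}^{*\top}\widehat{\bbe}_n^*(\alpha)-Y_{i_k}$. Once this non-degeneracy is in hand, the representation (\ref{main0}), the explicit formula (\ref{w}), and the identity $\sum_k w_{k,\alpha}=1$ are purely algebraic consequences.
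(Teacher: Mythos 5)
Your proof is correct, but it reaches (\ref{main0}) and (\ref{w}) by a genuinely different route than the paper. The paper works on the \emph{dual} side: it starts from the identity $\bar{B}_n(\alpha)=-\frac 1n\sum_{i=1}^n Y_i\hat{a}_{ni}^{\prime}(\alpha)$ (equation (\ref{tail5}), quoted from \cite{JP2014}), differentiates the dual constraints of (\ref{linprog}) to obtain (\ref{tail2}), notes that $\hat{a}_{ni}^{\prime}(\alpha)\neq 0$ exactly for the $p+1$ basic indices at a continuity point of the trajectory, and then solves the differentiated constraint system for the basic part $\hat{\mathbf a}_1^{\prime}(\alpha)$, giving $\mathbf w_{\alpha}^{\top}=-\frac 1n(\hat{\mathbf a}_1^{\prime}(\alpha))^{\top}=\frac 1n\mathbf 1_n^{\top}\mathbf X_n^*(\mathbf X_{n1}^*)^{-1}$; there the unit sum of the weights falls out of the constraint $\sum_{i=1}^n\hat{a}_{ni}^{\prime}(\alpha)=-n$. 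You instead argue on the \emph{primal} side: complementary slackness yields the interpolation equations $Y_{i_k}=\mathbf x_{i_k}^{*\top}\widehat{\bbe}_n^*(\alpha)$, you invert them to $\widehat{\bbe}_n^*(\alpha)=(\mathbf X_{n1}^*)^{-1}\mathbf Y_n(1)$, and substitute into the definition (\ref{22x}); the identity $\sum_{k}w_{k,\alpha}=1$ then follows from your neat observation that the intercept column makes $(\mathbf X_{n1}^*)^{-1}\mathbf 1_{p+1}$ equal to the first standard unit vector of $\R^{p+1}$. Your version is more self-contained: it needs neither the identity from \cite{JP2014} nor the piecewise linearity and differentiability of the regression rank scores, and it turns the unit-sum claim (which the paper essentially asserts via the constraint system) into an explicit two-line computation. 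What the paper's route buys is the direct link between $\bar{B}_n(\alpha)$ and the rank score process $\hat{\mathbf a}(\alpha)$, which is the machinery used elsewhere in the paper. Both arguments rest on the same almost-sure nondegeneracy (exactly $p+1$ interpolated observations, guaranteed by continuity of the $F_i$'s and full rank of $\mathbf X_n^*$, and valid off the finitely many breakpoints), and both dispose of (\ref{main01}) by deferring the strict bound at $\alpha=1$ to \cite{Jur2016}, you adding the (classical) monotonicity step $\bar{B}_n(\alpha)\leq\bar{B}_n(1)$ explicitly.
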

\begin{proof}
The regression quantile $\widehat{\bbe}_n^*(\alpha)$ is a step function of $\alpha\in(0,1).$ If $\alpha$ is a
continuity point of the regression quantile trajectory, then we have the following identity,  proven in \cite{JP2014}:
\begin{equation}
\label{tail5}
 \bar{B}_n(\alpha)=\frac{1}{n}\sum_{i=1}^n{\bf x}_i^{*\top}\widehat{\boldgreek\beta}_n^*(\alpha)=
-\frac{1}{n}\sum_{i=1}^n Y_i\hat{a}_{ni}^{\prime}(\alpha) 
\end{equation} 
where $\hat{a}_{ni}^{\prime}(\alpha))=\frac{d}{d\alpha}\hat{a}_{ni}(\alpha).$ 
Moreover, (\ref{linprog}) implies
\begin{eqnarray}
\label{tail2}
&&\sum_{i=1}^n\hat{a}_{ni}^{\prime}(\alpha)= -n\\
&&\sum_{i=1}^nx_{ij}\hat{a}_{ni}^{\prime}(\alpha) =-\sum_{i=1}^n x_{ij}, \qquad  1\leq j \leq p,\nonumber 
\end{eqnarray}
Notice that 
$\hat{a}_{ni}^{\prime}(\alpha)\neq 0$  iff $\alpha$ is the point of continuity of $\widehat{\bbe}_n^*(\cdot)$ and $Y_i=\mathbf x_i^{*\top}\widehat{\boldgreek\beta}_n^*(\alpha).$ 
To every fixed continuity point $\alpha$ correspond exactly $p+1$ such components, such that the corresponding   
$\mathbf x_i^*$ belongs to the optimal base of program (\ref{linprog}).  
Hence there exist coefficients $w_{k,\alpha}, \; k=1,\ldots,p+1$ such that 
$$\bar{B}_n(\alpha)=-\frac 1n\sum_{i=1}^n Y_i\hat{a}_{ni}^{\prime}(\alpha)=
\sum_{k=1}^{p+1}w_{k,\alpha}Y_{i_k}.$$

The  
equalities $Y_i=\mathbf x_i^{*\top}\widehat{\boldgreek\beta}_n^*(\alpha)$ hold  just for $p+1$ components of the optimal base $\mathbf x_{i_1}^*,\ldots,\mathbf x_{i_{p+1}}^*$.   
Let $\mathbf X_{n1}^*$ be the submatrix of $\mathbf X_n^*$ with the rows $\mathbf x_{i_1}^{*\top},\ldots,\mathbf x_{i_{p+1}}^{*\top}$ and let $(\hat{\mathbf a}_1^{\prime}(\alpha))^{\top}=(\hat{a}_{i_1}^{\prime}(\alpha),\ldots,\hat{a}_{i_{p+1}}^{\prime}(\alpha)).$ Then $\mathbf X_{n1}^*$ is regular with probability 1 and
$$\mathbf w_{\alpha}^{\top}=-\frac 1n(\hat{\mathbf a}^{\prime}(\alpha))^{\top}= \frac 1n\mathbf 1_n^{\top}\mathbf X_n^*(\mathbf X_{n1}^*)^{-1}.$$
$$(\hat{\mathbf a}_1^{\prime}(\alpha))^{\top}=-\mathbf 1_n^{\top}\mathbf X_n^*(\mathbf X_{n1}^*)^{-1}  \; \mbox{ and } \; \sum_{k=1}^{p+1}w_{k,\alpha}=1.$$
This and (\ref{tail5}) imply (\ref{main0}) and (\ref{w}). The inequality (\ref{main01}) was proven in \cite{Jur2016}.
\end{proof}

Let us now consider $\bar{B}_n(\alpha)$ as a process in $\alpha\in(0,1).$ Assume that  
all model errors $e_{ni}, \; i=1,\ldots,n$ are independent and equally distributed according to joint continuous increasing distribution function $F.$ We are interested in the \textit{the average regression quantile process} 
$$\bar{\mathcal B}_n(\alpha)
=\Big\{n^{1/2}\bar{\mathbf x}_n^{*\top}\Big(\widehat{\boldgreek\beta}_n^*(\alpha)-\check{\boldgreek\beta}(\alpha)\Big); \; 0<\alpha<1\Big\}$$
where $\check{\boldgreek\beta}(\alpha)=(F^{-1}(\alpha)+\beta_0,\beta_1,\ldots,\beta_p)^{\top}$ is the population counterpart of the regression quantile. 
As proven in  \cite{Jur2017}, 
 the process $\bar{\mathcal B}_n$ converges to a Gaussian process in the Skorokhod topology as $\ny,$  under  mild conditions on $F$ and $\mathbf X_n.$ More precisely, 
\begin{equation}\label{5.5b}
\bar{\mathcal B}_n\stackrel{\mathcal D}{\rightarrow}(f(F^{-1}))^{-1}W^* \; \mbox{ as } \; \ny
\end{equation}
where $W^*$ is the Brownian bridge on (0,1). 

However, we are rather interested in behavior of the process $\bar{\mathcal B}_n$ under a finite number of observations. The trajectories of $\bar{\mathcal B}_n$ are step functions, nondecreasing in $\alpha\in(0,1),$ and  they have finite numbers of discontinuities for each $n$.   As shown in \cite{BK82}, if 
$\bar{B}_n(\alpha_1)= \bar{B}_n(\alpha_2)$ for $0<\alpha_1<\alpha_2<1,$ then $\alpha_2-\alpha_1\leq \frac{p+1}{n}$ with probability 1, then the length of interval, on which is $\bar{B}_n(\alpha)$ constant, tends to 0 for $\ny$ and fixed $p.$ Let $0<\alpha_1<\ldots<\alpha_{J_n}<1$ be the breakpoints of $\bar{B}_n(\alpha), \; 0< \alpha<1,$ and $-\infty<Z_1<\ldots<Z_{J_n+1}<\infty$ be the corresponding values of $\bar{B}_n(\alpha)$ between the breakpoints. Then we can consider the inversion $\hat{F}_n(z)$ of $\bar{B}_n(\alpha),$  namely
$$\hat{F}_n(z)=\inf\{\alpha: \; \bar{B}_n(\alpha)\geq z\}, \; -\infty<z<\infty.$$
It is a bounded nondecreasing step function and, given $Y_1,\ldots,Y_n$ satisfying (\ref{1}), $\hat{F}_n$ is a distribution function of a random variable attaining values $Z_1,\ldots,Z_{J_n+1}$ with probabilities equal to the spacings of $0,\alpha_1,\ldots,\alpha_{J_n},1.$ The tightness of the empirical process $\hat{F}_n$ and its convergence to $F$ was studied in \cite{Portnoy1984} under some specific conditions;  $\hat{F}_n$ is recommended as an estimate of $F,$ which  would enable e.g. goodness-of-fit testing about $F$ in the presence of a nuisance regression.

\section{Properties of the averaged two-step regression quantile $\widetilde{\mathcal B}_n(\alpha)$}

While the advantage of $\bar{B}_n(\alpha)$ is in its monotonicity, the inference based on the process $\widetilde{\mathcal B}_n(\alpha)$ can be more comprehensible.  Hence, we can consider the empirical process $\widetilde{\mathcal B}_n(\alpha)$ based on two-step regression quantiles $\widetilde{\bbe}_n(\alpha)$ as an alternative to $\widehat{\bbe}_n(\alpha).$ Both processes are asymptotically equivalent as $\ny.$  

The two-step regression $\alpha$-quantile treats the slope components $\bbe$ and the intercept $\beta_0$ separately.
The slope component part  
is an R-estimate $\widetilde{\bbe}_{nR}$ of  
$\bbe.$ Its advantage is that it is invariant to the shift in location, hence independent of $\beta_0.$ It starts with
selection of a nondecreasing function $\varphi(u), \; u\in(0,1)$, square-integrable on (0,1).
   Then we can consider two types of rank scores, generated by $\varphi:$
\begin{enumerate}
	\item \begin{equation} \label{exact }
	\mbox{Exact scores:} \quad \mathcal A_n(i)=\E\{\varphi(U_{n:i})\},   \;  i=1,\ldots,n
	\end{equation}
\smallskip	
where $U_{n:1}\leq \ldots \leq U_{n:n}$ is the ordered random sample of size $n$ from the uniform (0,1) distribution. 
\item \begin{equation}\label{approx}
\mbox{Approximate scores:}   \begin{array}{rll} 
\quad either & (i) &  A_n(i)=n~\int_{(i-1)/n}^{i/n}\varphi(u)du,\\
                               or & (ii) & A_n(i)=\varphi\left( \frac{i}{n+1}\right), \; i=1,\ldots,n.\\
\end{array}	
\end{equation}																																								

\end{enumerate}
The test criteria and estimates based on either of these scores are asymptotically equivalent as $\ny;$ but the rank tests based on the exact scores are locally most powerful against pertinent alternatives under finite $n$.
The R-estimator $\widetilde{\bbe}_{nR}$ of the slopes is a minimizer of the \cite{Jaeckel1972} measure of rank dispersion ${\mathcal D}_n({\mathbf b}):$
\begin{eqnarray}
\label{11}
&&\widetilde{\boldgreek\beta}_{nR}={\rm argmin}_{{\bf b}\in\mathbb{R}^p}{\mathcal D}_n({\mathbf b}),\\
\mbox{where }\quad && 
{\mathcal D}_n({\bf b})=\sum_{i=1}^n(Y_i-{\bf x}_i^{\top}{\bf b})~\mathcal A_n(R_{ni}(Y_i-\mathbf x_i^{\top}{\bf b})), \quad \mathbf b\in\mathbb R^p\nonumber
\end{eqnarray}
and where $\mathcal A_n(\cdot)$ can be replaced with $A_n(\cdot).$ Here
$R_{ni}(Y_i-\mathbf x_i^{\top}{\bf b})$ is the rank of the $i$-th residual, $i=1,\ldots,n.$  
The intercept component $\tilde{\beta}_{n0}(\alpha)$ pertaining to the two-step regression $\alpha$-quantile is defined as the $[n\alpha]$-order statistic of the residuals $Y_i-\mathbf x_i^{\top}\widetilde{\boldgreek\beta}_{nR}, \; i=1,\ldots,n.$ The two-step $\alpha$-regression quantile is then the vector
\begin{equation}\label{11a}
\widetilde{\bbe}^*_n(\alpha)=\left(\begin{array}{c}
\tilde{\beta}_{n0}(\alpha)\\
\widetilde{\boldgreek\beta}_{nR}\\
\end{array}
\right)\in\mathbb R^{p+1}.
\end{equation}
The typical choice of $\varphi$ is the following:
\begin{equation}
\label{8}
\varphi_{\lambda}(u)=\lambda-I[u<\lambda],
 \quad 0<u<1, \quad 0<\lambda<1
\end{equation} 
combined with  the approximate scores (ii) in (\ref{approx}). 
These scores were originated in \cite{Hajek65}; he used the following scores [now known as H\'ajek's rank scores]: 
\begin{equation} \label{Hajek}
a_i(\lambda,\mathbf b)=\left\{\begin{array}{lll}
0  &\ldots&\quad R_{ni}(Y_i)<n\lambda\\[1mm]
R_{ni}(Y_i)-n\lambda \quad &\ldots&\quad n\lambda\leq R_{ni}(Y_i)<n\lambda+1\\[1mm]
1                 &\ldots&\quad n\lambda+1\leq R_{ni}(Y_i).\\
\end{array}\right.
\end{equation}
The solutions of  (\ref{11}) are generally not uniquely
determined. We can e.g.
take the center of gravity of the set of all solutions; 
however, the asymptotic representations and
distributions apply to any solution.

Define the \textit{averaged two-step regression $\alpha$-quantile} $\widetilde{B}_n(\alpha)$   as 
\begin{equation}\label{21a}
\widetilde{B}_n(\alpha)=\bar{\mathbf x}_n^{*\top}\widetilde{\bgk\beta}_n^*(\alpha).
\end{equation}
By (\ref{11}),
\begin{equation}\label{11x} 
\widetilde{B}_n(\alpha)=\left(Y_i-(\mathbf x_i-\bar{\mathbf x}_n)^{\top}\widetilde{\bbe}_{nR}\right)_{n:[n\alpha]},
\end{equation}
hence it is equal to the $[n\alpha]$-th order statistic of the residuals $Y_i-(\mathbf x_i-\bar{\mathbf x}_n)^{\top}\widetilde{\bbe}_{nR}, \; i=1,\ldots,n.$
Then $\widetilde{B}_n(\alpha)$ is obviously
scale equivariant and regression equivariant. 
\cite{JP2005} originally considered the two-step regression $\alpha$-quantile with $\lambda=\alpha$ in (\ref{8}) for each $\alpha\in(0,1)$ in the R-estimator of the slopes.  
 The  averaged two-step version corresponding to this choice is very close to $\bar{B}_n(\alpha),$ but for finite $n$ it is generally not monotone in $\alpha.$  However, it suffices to consider $\lambda\in(0,1)$ fixed, independent of $\alpha;$  this makes $\widetilde{B}_n(\alpha)$ monotone in $\alpha,$ thus invertible and simpler. 
$\widetilde{B}_n(\alpha)$   is asymptotic equivalent
   to $\bar{B}_n(\alpha)$  under general conditions, hence also
   asymptotically equivalent to $e_{n:[n\alpha]}+\beta_0+\bar{\mathbf x}_n^{\top}\bbe.$ Hence $\widetilde{B}_n(\alpha)$ is a convenient tool for an inference under a nuisance regression. 
The asymptotic equivalence of $\widetilde{B}_n(\alpha)$  and $\bar{B}_n(\alpha)$ will be proven under the following mild conditions on $F$ and on $\mathbf X_n=\left[\mathbf x_{n1},\ldots,\mathbf x_{nn}\right]^{\top}:$
\begin{description}
	\item[{(A1)}] \textit{Smoothness of $F$:} The errors $e_{ni}, \; i=1,\ldots,n$ are independent and identically distributed. Their distribution function $F$ has an absolutely continuous density 
 and positive and finite Fisher's information:
	$$0<\mathcal I(f)=\int \left(\frac{f^{\prime}(z)}{f(z)}\right)^2dF<\infty.$$
 \item[{(A2)}] \textit{Noether's condition on regressors:}
\begin{eqnarray*}
&&\lim_{\ny}\mathbf Q_n=\mathbf Q \quad\mbox{ where } \mathbf Q_n=n^{-1}\sum_{i=1}^n(\mathbf x_i-\bar{\mathbf x}_n)(\mathbf x_i-\bar{\mathbf x}_n)^{\top}\\ 
&& \mbox{ and } \mathbf Q \; \mbox{ is positively definite } \; p\times p \; \mbox{ matrix; moreover,}\\ 
&&\lim_{\ny}\max_{1\leq i\leq n}~n^{-1}(\mathbf x_i-\bar{\mathbf x}_n)^{\top}\mathbf Q_n^{-1}(\mathbf x_i-\bar{\mathbf x}_n)=0. 
\end{eqnarray*} 
 \item[{(A3)}] \textit{Rate of regressors:}
 $$ \max_{1\leq i\leq n}\|\mathbf x_{ni}-\bar{\mathbf x}_n\|=o(n^{1/4}) \; \mbox{ as } \; \ny.$$  	
\end{description}
We shall prove the asymptotic equivalence for R-estimators based on score function $\varphi_{\lambda}, \; 0<\lambda<1,$ because of its simplicity. 
However, an analogous proof applies to an R-estimator generated by any nondecreasing and square-integrable function $\varphi.$
\begin{theorem}\label{TheoremA}
Let $\widetilde{B}_n(\alpha)=\left(Y_i-(\mathbf x_i-\bar{\mathbf x}_n)^{\top}\widetilde{\bbe}_{nR}\right)_{n:[n\alpha]}$ be two-step averaged $~\alpha$-regression quantile (TARQ) in the model (\ref{1}), with R-estimator $\widetilde{\bbe}_{nR}$ generated by $\varphi_{\lambda}$ in (\ref{8}),
$\lambda\in(0,1)$ fixed. Then, under the conditions \textbf{(A1)}--\textbf{(A3)},
\begin{eqnarray}\label{the}
&(i)~~&n^{1/2}\left[(\widetilde{B}_n(\alpha)-\beta_0-\bar{\mathbf x}_n\bbe)-e_{n:[n\alpha]}\right]=o_p(1)\\
&(ii)~~&n^{1/2}\left|\widetilde{B}_n(\alpha)-\bar{B}_n(\alpha)\right|=o_p(1) \label{the1}
\end{eqnarray} 
as $\ny$,  
uniformly over $\alpha\in(\varepsilon,1-\varepsilon)\subset(0,1), \; \forall \varepsilon\in(0,\frac 12).$
\end{theorem}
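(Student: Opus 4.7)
The plan is to start from the explicit form (\ref{11x}) of $\widetilde B_n(\alpha)$. Substituting the model $Y_i = \beta_0 + \mathbf{x}_i^{\top}\bbe + e_i$ and regrouping around $\bar{\mathbf{x}}_n$ yields
\[
\widetilde B_n(\alpha) - \beta_0 - \bar{\mathbf{x}}_n^{\top}\bbe = \bigl[e_i + \delta_{ni}\bigr]_{n:[n\alpha]}, \qquad \delta_{ni} = -(\mathbf{x}_i - \bar{\mathbf{x}}_n)^{\top}(\widetilde{\bbe}_{nR} - \bbe),
\]
so (i) reduces to showing that the $[n\alpha]$-th order statistic of the perturbed errors $e_i + \delta_{ni}$ agrees with $e_{n:[n\alpha]}$ up to $o_p(n^{-1/2})$, uniformly in $\alpha \in (\varepsilon, 1-\varepsilon)$.

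Next I would invoke the classical asymptotic linearity of R-estimators generated by $\varphi_\lambda$ under (A1)-(A2): $\widetilde{\bbe}_{nR}$ admits a Bahadur-type representation, and in particular $\|\widetilde{\bbe}_{nR} - \bbe\| = O_p(n^{-1/2})$. Combined with (A3), this yields $\max_i |\delta_{ni}| = o_p(n^{-1/4})$, while (A2) gives $\sum_i \delta_{ni}^2 \leq \|\widetilde{\bbe}_{nR}-\bbe\|^2 \sum_i \|\mathbf{x}_i - \bar{\mathbf{x}}_n\|^2 = O_p(1)$. The key algebraic observation that powers the whole argument is that the centering forces
\[
\sum_{i=1}^n \delta_{ni} = -\Bigl(\sum_{i=1}^n (\mathbf{x}_i - \bar{\mathbf{x}}_n)\Bigr)^{\!\top}(\widetilde{\bbe}_{nR} - \bbe) = 0,
\]
so no first-order bias survives when comparing the perturbed and unperturbed empirical distributions.

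The heart of the proof is to compare $G_n(t) = n^{-1}\sum_i I[e_i + \delta_{ni} \leq t]$ with $F_n(t) = n^{-1}\sum_i I[e_i \leq t]$ and show $\sup_t |G_n(t) - F_n(t)| = o_p(n^{-1/2})$. Setting $H_n(t) = n^{-1}\sum_i F(t - \delta_{ni})$ and conditioning on $\widetilde{\bbe}_{nR}$, I would decompose
\[
G_n(t) - F_n(t) = \bigl\{[G_n(t) - H_n(t)] - [F_n(t) - F(t)]\bigr\} + \bigl\{H_n(t) - F(t)\bigr\}.
\]
A second-order Taylor expansion of $F$, using boundedness of $f$ and $f'$ from (A1), gives
\[
\Bigl|H_n(t) - F(t) + n^{-1} f(t)\sum_i \delta_{ni}\Bigr| \leq C n^{-1}\sum_i \delta_{ni}^2 = O_p(n^{-1});
\]
the linear term vanishes by the centering identity, so this deterministic piece is $o_p(n^{-1/2})$ uniformly in $t$. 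The stochastic bracket is handled by a modulus-of-continuity estimate for the empirical process of $\{e_i\}$: since $F$ is Lipschitz and $\max_i |\delta_{ni}| = o_p(n^{-1/4})$, standard equicontinuity gives $\sup_t |[G_n(t) - H_n(t)] - [F_n(t) - F(t)]| = o_p(n^{-1/2})$. Inverting at the level $[n\alpha]/n$, and using that $f\circ F^{-1}$ is bounded away from zero on $[\varepsilon, 1-\varepsilon]$, delivers (i). Part (ii) then follows from (i) by the triangle inequality together with the earlier result of \cite{JP2014} that $\sqrt{n}[\bar B_n(\alpha) - \beta_0 - \bar{\mathbf{x}}_n^{\top}\bbe - e_{n:[n\alpha]}] = o_p(1)$ uniformly on $(\varepsilon,1-\varepsilon)$.

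The main obstacle I anticipate is the uniform empirical-process oscillation step; condition (A3) is precisely strong enough to bring the random perturbations $\delta_{ni}$ into the range where classical equicontinuity of $F_n - F$ applies, whereas a weaker assumption on the regressors would require a more delicate Bahadur-type expansion of the order-statistic process itself.
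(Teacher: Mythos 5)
Your overall strategy coincides with the paper's: both proofs reduce (i) to comparing the $[n\alpha]$-th order statistic of the perturbed errors $r_{ni}=e_{ni}-(\mathbf x_i-\bar{\mathbf x}_n)^{\top}(\widetilde{\bbe}_{nR}-\bbe)$ with $e_{n:[n\alpha]}$, both rest on the $\sqrt{n}$-consistency of the R-estimator together with \textbf{(A3)}, both exploit the centering $\sum_{i=1}^n(\mathbf x_i-\bar{\mathbf x}_n)=\mathbf 0$ to kill the first-order drift, and both obtain (ii) from (i) and Theorem 2 of \cite{JP2014}. The implementations differ: the paper characterizes the residual quantile $\tilde{a}_n(\alpha)$ through its estimating equation (Lemma A.2 of \cite{RC1980}), then applies a ready-made uniform asymptotic linearity lemma (Lemma 5.5 of \cite{JSP2013}, the paper's steps (\ref{21})--(\ref{22})) and the Bahadur representation (\ref{Bahadur}); you instead compare the empirical distribution functions $G_n$ and $F_n$ uniformly in $t$ and invert at the end. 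Your route is more self-contained, at the cost of having to reprove what the cited lemmas supply.

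The step that fails as literally written is the conditioning device. Since $\widetilde{\bbe}_{nR}$ is a function of all of $e_1,\dots,e_n$, conditioning on it does not make $G_n(t)-H_n(t)$ a centered sum: given $\widetilde{\bbe}_{nR}$, the indicator $I[e_i+\delta_{ni}\le t]$ does not have conditional mean $F(t-\delta_{ni})$, and equicontinuity of the empirical process of $\{e_i\}$ applies to deterministic (or independent) shifts, not to shifts built from the same sample. The repair is the standard one, and it is precisely what the paper does: prove the oscillation bound uniformly over deterministic perturbations $-n^{-1/2}(\mathbf x_i-\bar{\mathbf x}_n)^{\top}\mathbf b$ with $\|\mathbf b\|\le C$ (the content of (\ref{21})), and only then substitute $\mathbf b=n^{1/2}(\widetilde{\bbe}_{nR}-\bbe)=\mathcal O_p(1)$ to obtain (\ref{22}). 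Two smaller repairs: condition \textbf{(A1)} does not guarantee a bounded $f'$, so your second-order Taylor expansion of $H_n(t)-F(t)$ should be replaced by the mean value theorem plus local uniform continuity of $f$ --- the linear term still cancels by the centering identity and the remainder is bounded by $n^{-1}\sum_i|\delta_{ni}|\cdot o_p(1)=o_p(n^{-1/2})$; and the final inversion needs, besides $f(F^{-1}(\alpha))$ bounded away from zero on $[\varepsilon,1-\varepsilon]$, control of the oscillation of the sample quantile process over intervals of length $o_p(n^{-1/2})$, i.e.\ the Bahadur representation which the paper invokes explicitly.
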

\begin{proof} Let us write 
\begin{eqnarray}\label{61}
&&Y_i-(\mathbf x_i-\bar{\mathbf x}_n)^{\top}\widetilde{\bbe}_{nR}\\
&&=e_{ni}+\beta_0+\bar{\mathbf x}_n^{\top}\bbe-(\mathbf x_i-\bar{\mathbf x}_n)^{\top}(\widetilde{\bbe}_{nR}-\bbe), \; i=1,\ldots,n.\nonumber
\end{eqnarray}
We shall study the $[n\alpha]$-quantile of variables $$r_{ni}=e_{ni}-(\mathbf x_i-\bar{\mathbf x}_n)^{\top}(\widetilde{\bbe}_{nR}-\bbe)= Y_i-(\mathbf x_i-\bar{\mathbf x}_n)^{\top}\widetilde{\bbe}_{nR}-\beta_0-\bar{\mathbf x}_n^{\top}\bbe, \; i=1,\ldots,n.$$
Recall the Bahadur representation of sample $\alpha$-quantile $e_{n:[n\alpha]}$ of $e_{n1},\ldots,e_{nn}:$
\begin{eqnarray}\label{Bahadur}
&&n^{1/2}[e_{n:[n\alpha]}-F^{-1}(\alpha)]\nonumber \\
&&=n^{-1/2}[f(F^{-1}(\alpha))]^{-1}\sum_{i=1}^n\{\alpha-\mathcal{I}[e_{ni}<F^{-1}(\alpha)]\}+o(1)
\end{eqnarray}                                                                                                                                         a.s. as $\ny.$                                                                                                                                         Under conditions {\bf (A1)}--{\bf (A3)}, the R-estimator $\widetilde{\bbe}_{nR}$ admits the following asymptotic representation: 
\begin{eqnarray}
\label{12a}
&&n^{\frac 12}(\widetilde{\boldgreek\beta}_{nR}-{\boldgreek\beta})\\
&&=n^{-\frac 12}(f(F^{-1}(\lambda))^{-1}{\mathbf Q}_n^{-1}%
\sum_{i=1}^n(\mathbf x_i-\bar{\mathbf x}_n)\left(\lambda-I[e_{ni}<F^{-1}(\lambda)]\right)+o_p(n^{-1/4}),\nonumber
\end{eqnarray}
hence $\|n^{\frac 12}(\widetilde{\boldgreek\beta}_{nR}-{\boldgreek\beta})\|=\mathcal O_p(1).$ The details for (\ref{Bahadur}) and (\ref{12a}) can be found in \cite{JSP2013}.\\
The $[n\alpha]$ quantile $\tilde{a}_{n}(\alpha)$ of $r_{n1},\ldots,r_{nn}$ is a solution of the minimization
$$\tilde{a}_n(\alpha)=\arg\min_{a\in\R^1}\sum_{i=1}^n\rho_{\alpha}(r_{ni}-a),$$
where $\rho_{\alpha}(z)=|z|\{\alpha \mathcal I[z>0]+(1-\alpha)\mathcal I[z<0]\}, \ z\in\R^1.$
 Denote as  $\psi_{\alpha}$ the right-hand derivative of $\rho_{\alpha},$
i.e. $\psi_{\alpha}(z)=\alpha-\mathcal I[z<0], \; z\in{\R}.$ 
Using Lemma A.2 in \cite{RC1980}, we can show that 
\begin{eqnarray}
\label{7}
&&n^{-\frac 12}\sum_{i=1}^n\psi(r_{ni}-\tilde{a}_n(\alpha))\rightarrow 0, \quad \mbox{ i.e. } \\
&&n^{-\frac 12}\sum_{i=1}^n\left(\alpha-\mathcal I\left[e_{ni}-(\mathbf x_i-\bar{\mathbf x}_n)^{\top}(\widetilde{\bbe}_{nR}-\bbe)<\tilde{a}_n(\alpha)\right]\right)\rightarrow 0 \nonumber
\end{eqnarray}
almost surely as $\ny.$ Notice that $\sum_{i=1}^n(\mathbf x_i-\bar{\mathbf x}_n)={\mathbf 0};$ hence we conclude from \cite{JSP2013}, Lemma 5.5, that it holds 
\begin{eqnarray}
\label{21}
&&
\sup_{\|{\mathbf b}\|\leq C}\Big\{n^{-\frac 12}\Big|\sum_{i=1}^n\Big(\mathcal I[e_{ni}-n^{-\frac 12}{(\mathbf x}_i-\bar{\mathbf x}_n)^{\top}{\mathbf b}<\tilde{a}_n(\alpha)]\nonumber\\
&&\qquad\qquad -\mathcal I[e_{ni}<\tilde{a}_n(\alpha)]\Big)\Big|\Big\}=o_p(1) \; \mbox{  as  } \; \ny,
\end{eqnarray}
 for every $C, \; 0<C<\infty.$ Inserting ${\mathbf b}\mapsto n^{\frac 12}(\widetilde{\boldgreek\beta}_{nR}-{\boldgreek\beta})=
{\mathcal O}_p(1)$ into (\ref{21}), we obtain
\begin{eqnarray}
\label{22}
&& n^{-\frac 12}\Big|\sum_{i=1}^n\Big(\mathcal I\left[e_{ni}-(\mathbf x_i-\bar{\mathbf x}_n)^{\top}(\widetilde{\boldgreek\beta}_{nR}-\bbe)<\tilde{a}_n(\alpha)
\right]\nonumber\\
&&\qquad\qquad -\mathcal I\left[e_{ni}<\tilde{a}_n(\alpha)\right]\Big)\Big|=o_p(1) \; \mbox{ as } \; \ny.
\end{eqnarray}
Combining (\ref{Bahadur}), (\ref{7})--(\ref{22}), we conclude that $n^{1/2}(\tilde{a}_n(\alpha))-e_{n:[n\alpha]})=o_p(1)$ as $\ny,$
hence
\begin{equation}\label{64}
n^{1/2}\left[\widetilde{B}_n(\alpha)-\beta_0-\bar{\mathbf x}_n^{\top}\bbe-e_{n:[n\alpha]}\right]=o_p(1) \; \mbox{ as } \;
\ny
\end{equation}
what gives (\ref{the}). This together with Theorem 2 in \cite{JP2014} implies (\ref{the1}). 
\end{proof}

\begin{remark} $\widetilde{\bbe}_{nR}$ can be replaced by any $\sqrt{n}$-consistent R-estimator of $\bbe.$ However, the score function of type $\varphi_{\lambda}$ is more convenient for computation and hence more convenient for applications. 
Various choices of R-estimators are numerically compared in Section 4.
\end{remark}
\section{Computation and numerical illustrations}
\setcounter{equation}{0}
The simulation study describes the methods for computation of the proposed estimates and illustrates their properties. For the computation of the averaged regression quantile $\bar{B}_n(\alpha),$ the R package quantreg and its function rq() is used; it makes use of a variant of the simplex algorithm. 

Concerning the two-step averaged regression quantile $\widetilde{B}_n(\alpha)$, the most difficult is the first step - the computation of the R-estimator $\widetilde{\bbe}_{nR}$ of the slopes. In the numerical illustration below the score function $\varphi_{\lambda}(u)=\lambda-I[u<\lambda]$ from (\ref{8}) and the approximate scores (i) from (\ref{approx}) are applied. In this case the function rfit() from the R package Rfit could be directly used. For the rfit() function the score function corresponding to (\ref{8}) and (i) of (\ref{approx}) has to be defined, i.e. at point $\frac{\lceil \lambda n \rceil}{ n+1 }$ attaining the value $\lceil \lambda n \rceil -1 - \lambda (n-1) = A_n(i)$. The function rfit() uses the minimization routine optim() which is a quasi-Newton optimizer. This method works well for simple linear regression model but is less precise in case of multiple regression. So, it is better to use the fact that when employing the score function (\ref{8}) with $\lambda = \alpha$ and the approximate scores (i) from (\ref{approx}) the slope components of the regression $\alpha$-quantile and the two-step regression $\alpha$-quantile coincide, $\widehat{\boldgreek\beta}_n(\alpha) = \widetilde{\bbe}_{nR}$ for every fixed $\alpha \in (0,1)$, see \cite{Jur2017}. Therefore, the rq() function from the quantreg package is then used to find the exact solution $\widetilde{\bbe}_{nR}$.    

The averaged regression quantile $\bar{B}_n(\alpha)$ and the two-step averaged regression quantile $\widetilde{B}_n(\alpha)$ (and their inversions) can be used as the estimates of the quantile function (and of the distribution function, respectively) of the model errors.  The behavior of the proposed estimates is illustrated in the following simulation study. 

\begin{figure}[!t]
	\centering
	\includegraphics[width=4.4in]{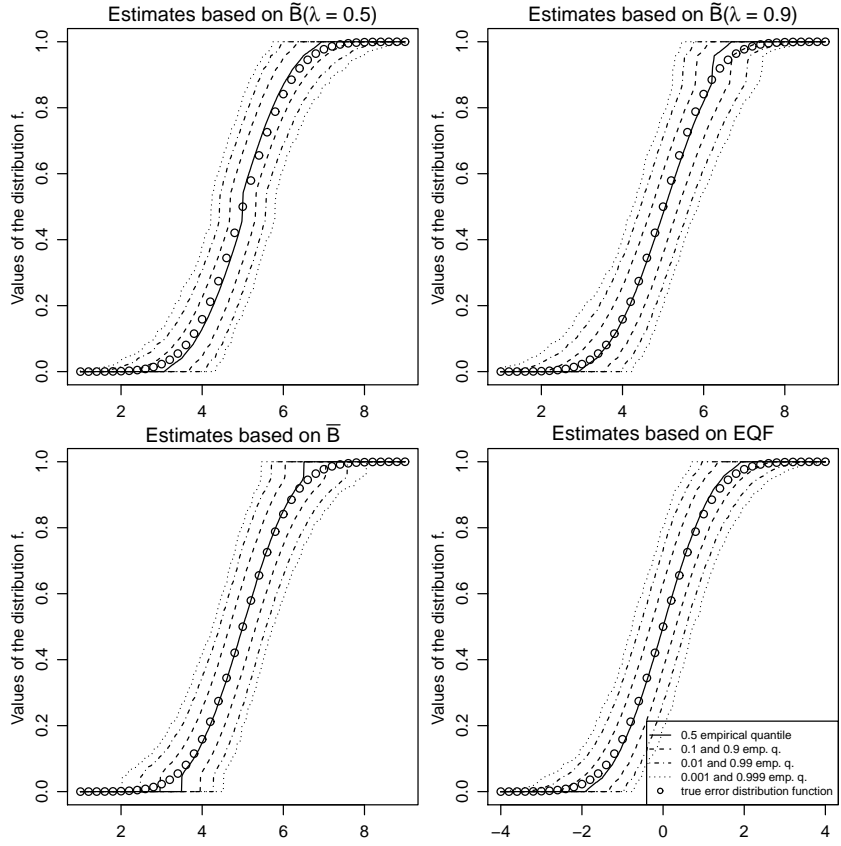}
	\caption{Empirical quantile estimates of normal distribution function based on $\bar{B}$, $\widetilde{B}(0.5)$, $\widetilde{B}(0.9)$ and empirical quantile function (EQF) of errors.}
	\label{fig_norm}
\end{figure}

\begin{figure}[!t]
	\centering
	\includegraphics[width=4.4in]{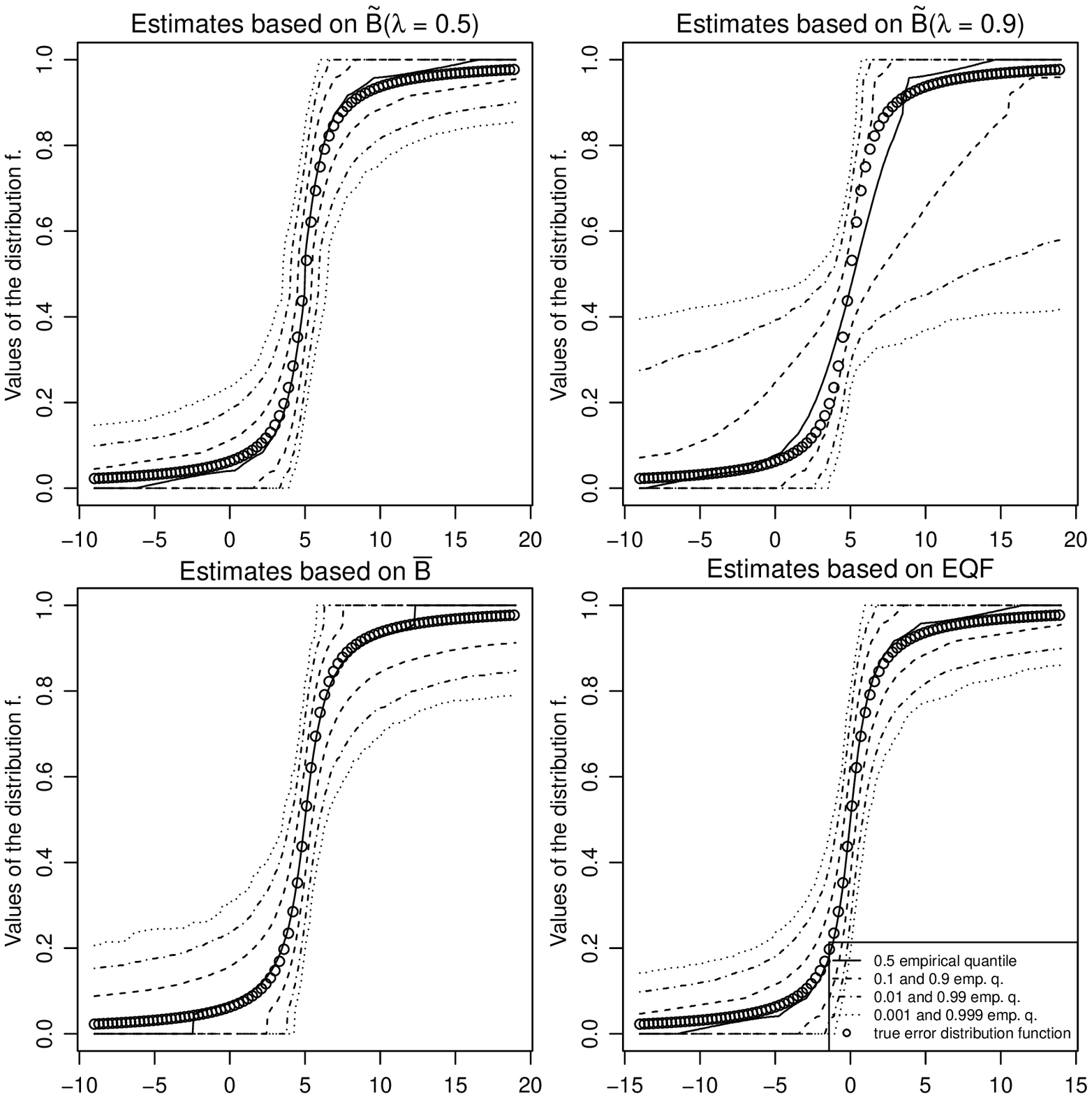}
	\caption{Empirical quantile estimates of cauchy distribution function based on $\bar{B}$, $\widetilde{B}(0.5)$, $\widetilde{B}(0.9)$ and empirical quantile function (EQF) of errors.}
	\label{fig_cauchy}
\end{figure}

\begin{figure}[!t]
	\centering
	\includegraphics[width=4.4in]{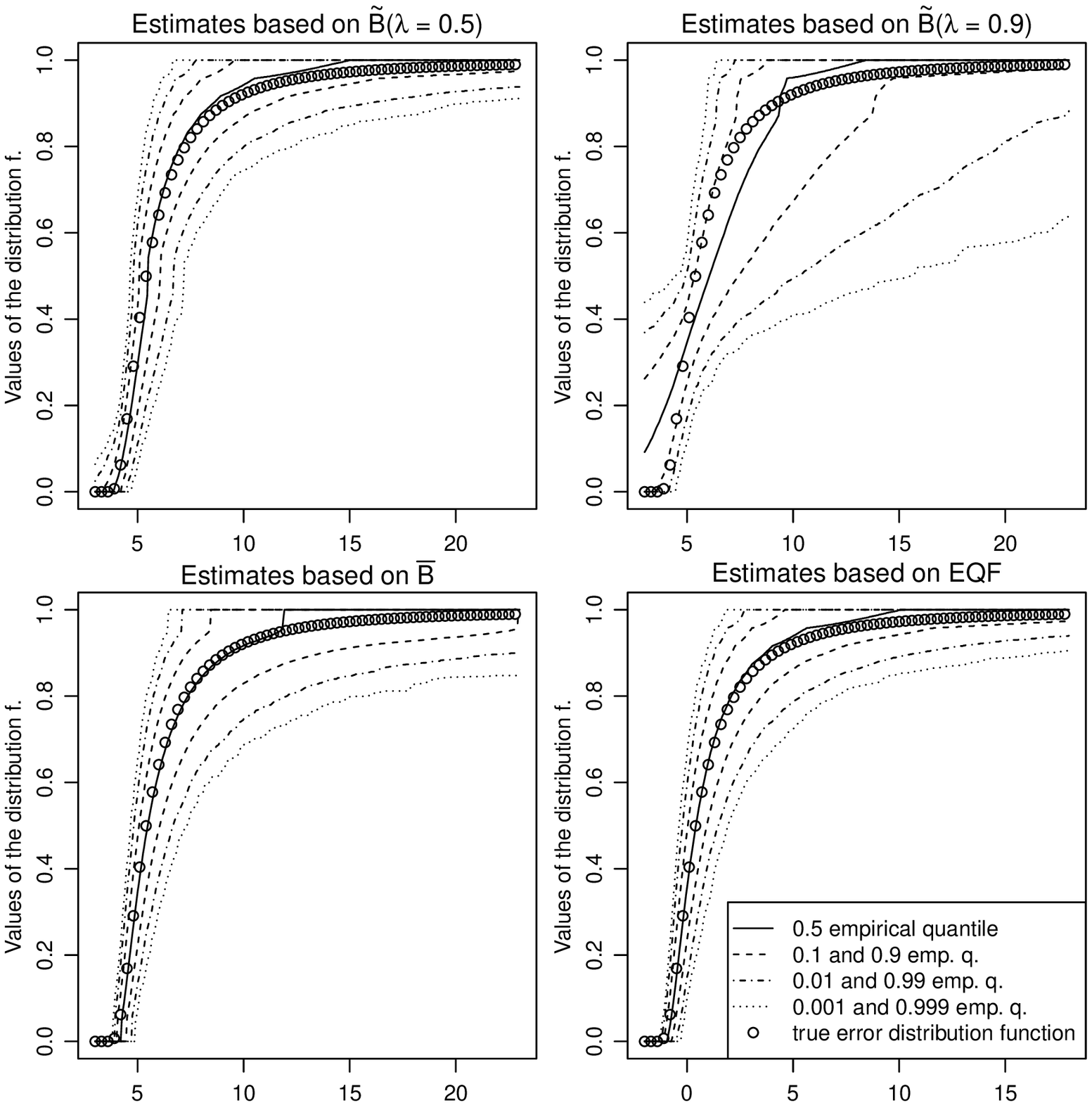}
	\caption{Empirical quantile estimates of GEV ($k = -0.5$) distribution function based on $\bar{B}$, $\widetilde{B}(0.5)$, $\widetilde{B}(0.9)$ and empirical quantile function (EQF) of errors.}
	\label{fig_GEV}
\end{figure}

The regression model 
\begin{equation}
Y_{ni}=\beta_0+{\bf x}_{ni}^{\top}{\boldgreek\beta}+e_{ni}, \quad
i=1,\ldots,n
\end{equation}
is simulated with the following parameters:
\begin{itemize}
	\item sample size $n=25$,
	\item $\beta_0 = 5$,
	\item $\boldgreek\beta = (\beta_1, \beta_2)= (-3, 2)$.
\end{itemize}
The columns of the regression matrix $(x_{11}, \ldots, x_{n1})^{\top}$ and $(x_{12}, \ldots, x_{n2})^{\top}$ are generated as two independent samples from the uniform distributions $U(0,4)$ and $U(-4, 2)$, respectively, and are standardized so that $\sum_{i=1}^{n} x_{ij} = 0, \; j=1,2$.  
The errors $e_{ni}$ are generated from the standard normal, the standard Cauchy or the generalized extreme value (GEV) distribution with the shape parameter $k = -0.5$. For each case $10\,000$ replications of the model were simulated and $\bar{B}_n(\alpha)$ and $\widetilde{B}_n(\alpha)$ and their inversions were computed. For the two-step version $\widetilde{B}_n(\alpha)$ the score-generating function (\ref{8}) with fixed $\lambda = 0.5$ or $0.9$ was used. For a comparison, the empirical quantile function of the errors $e_{ni}$ and its inversion were calculated as well. 
Empirical quantile estimates based on $\bar{B}_n$ and on $\widetilde{B}_n$ 
were then calculated and plotted. Since the figures showing estimates of the true quantile functions and of the true distribution functions look very similar, up to the inversion, only the figures for the distribution functions are presented. The statistical software R was used for all calculations.

The Figures \ref{fig_norm} - \ref{fig_GEV} show the empirical quantile estimates of the normal, Cauchy and GEV distribution functions. The approximation of the distribution functions appears to be very good. We notice that in the case of  two-step regression quantile $\widetilde{B}_n(\alpha)$ with  $\lambda$ fixed, the quality of the estimate is sensitive to the choice of $\lambda$, especially for skewed or heavy-tailed distributions. The choice around $\lambda=0.5$ is generally recommended.

\section{Conclusion}
\setcounter{equation}{0}
The averaged regression quantile $\bar{B}_n(\alpha)$ and its two-step modification $\widetilde{B}_n(\alpha),$ $0<\alpha<1$ appear to be very convenient tools in the analysis of various functionals of the risk  in the situation that the this depends on some exogenous variables, in the intensity that is not fully under our control. The choice of $\alpha\in(0,1)$ provides a balance between the concerns about underestimating and  overestimating the losses in the situation. The increasing $\alpha\nearrow 1$  reflects a greater concern about underestimating the loss, comparing to overestimating. Both $\bar{B}_n(\alpha)$ and $\widetilde{B}_n(\alpha)$ can be advantageously used in estimating the expected shortfall (\ref{shortfall}) and other modern measures of the risk, as well as estimating and testing other characteristics of the market or the everyday practice, based on the functionals of the quantiles.


\begin{thebibliography}{9}  

                                                                                              %

\bibitem {AcerbiTasche2002} 
        \textsc{{Acerbi, C. and Dirk Tasche, D.}}:\ 
        \textit{{Expected shortfall: A natural coherent alternative to value at risk}},
        {Economic Notes},
        \textbf{{31}}, (2002), 
        {379--388}.
        

\bibitem{Angrist2006}
        \textsc{Angrist, J.}, \textsc{Chernozhukov, V.} and \textsc{Fern\'andez-Val, I.}:\ \textit{Quantile regression under misspecification, with an application to the U.S. wage structure}, 
        {Econometrica}, \textbf{74}, (2006), 
        {539--563}.
        

\bibitem{Arias2001} 
        \textsc{Arias, O.}, \textsc{Hallock, K. F.} and \textsc{Sosa-Escudero, W.}:\  
        \textit{Individual heterogeneity in the returns to schooling: Instrumental variables quantile regression using twins data}, 
        {Empirical Economics}, 
        \textbf{{26}}, (2001),
        {7--40}.
        
        
\bibitem{Bassett1988}
        \textsc{Bassett, G.W. Jr.}, 
        \textit{{A property of the observations fit by the extreme regression quantiles}}, 
        {Comp. Stat. Data Anal.}, 
        \textbf{{6}}, (1988),
        {353--359}. 
                
        
\bibitem{Artzner1997} 
        \textsc{Artzner, P.},  \textsc{Delbaen, F.},  \textsc{Eber, J.-M.} and \textsc{Heath, D.}:\ 
        \textit{{Thinking coherently}}, 
        {RISK},
        \textbf{{10/11}},  
        (1997).
        
\bibitem{Artzner1999} 
        \textsc{Artzner, P.}, \textsc{Delbaen, F.}, \textsc{Eber, J.-M.} and \textsc{Heath, D.}:\
        \textit{{Coherent measures of risk}},
        {Math. Fin.}, 
        \textbf{{9}}, (1999), 
        {203--228}. 
        
        
\bibitem{BK82} 
        \textsc{Bassett, G. W.} and \textsc{Koenker, R. W.}:\ 
        \textit{{An empirical quantile function for linear models with iid errors}}, 
        {J. Amer. Statist. Assoc.}, 
        \textbf{{77}}, (1982),
        {405--415}.
        
        
\bibitem{Braione}   
        \textsc{Braione, M.}  and  \textsc{Scholtes, N. K.}:\  
        \textit{{Forecasting value-at-risk under different distributional assumptions}}, 
        {Econometrics}, 
        \textbf{{4}}, (2016),
        {2--27}.
        
        
\bibitem{CaiWang} 
        \textsc{Cai, Z.} and \textsc{Wang, X.}:\ 
        \textit{{Nonparametric estimation of conditional VaR and expected shortfall}}, 
        {Journal of Econometrics},
        \textbf{{147}}, (2008),
        {120--130}.
        
        
\bibitem{Chan} 
        \textsc{{Chan, J. S .K.}}, 
       \textit{ {Predicting loss reserves using quantile regression}}, 
        {Journal of Data Science}, 
        \textbf{{13}}, (2015),
        {127--156}.
        
        
\bibitem{Carroll1999} 
        \textsc{Carroll, R. J.}, \textsc{Maca, J. D.} and \textsc{Ruppert, D.}:\  
        \textit{{Nonparametric regression in the presence of mesurement error}}, 
        {Biometrika},
        \textbf{{86}}, (1999),
        {541--554}. 
        
        
\bibitem{GJ92} 
        \textsc{Gutenbrunner,C.} and \textsc{Jure\v{c}kov\'a, J.}:\ 
        \textit{{Regression rank scores and regression quantiles}}, 
        {Ann. Statist.}, 
        \textbf{{20}}, (1992),
        {305--330}.
         
        
\bibitem{GJKP93} \textsc{Gutenbrunner,C.}, \textsc{Jure\v{c}kov\'a, J.}, \textsc{Koenker, R.} and \textsc{Portnoy, S.}:\
        \textit{{Tests of linear hypotheses based on regression rank scores}}, 
        {Nonpar. Statist.}, 
        \textbf{{2}}, (1993),
        {307--331}.
        
\bibitem{Jaeckel1972} 
        \textsc{{Jaeckel, L. A.}}:\ \textit{Estimating regression coefficients by minimizing the dispersion of the residuals},
        {Ann. Math. Statist.}, \textbf{{43}}, (1972),
         {1449--1459}.
        
\bibitem{Hajek65} 
        \textsc{{H\'ajek, J.}},  
        \textit{{Extension of the Kolmogorov-Smirnov test to regression alternatives}},  
        {In: \textit{Proceedings of the International Research Seminar}}, 
        {LeCam, L.}, Editor,
        {Univ. of California Press, Berkeley}, (1965),
        {45--60}. 
        
        
\bibitem{Jur1977} \textsc{Jure\v{c}kov\'a, J.}:\ 
        \textit{{Asymptotic relation ofM-estimates and R-estimates in the linear regression model}},
        {Ann. Statist.}, 
       \textbf{{5}}, (1977),
        {464--472}. 
        
        
\bibitem{Jur2007a} \textsc{Jure\v{c}kov\'a, J.}:\ 
        \textit{{Remark on extreme regression quantile}}, 
        {Sankhy$\bar{\rm a}$}, 
        \textbf{{69}}, (2007),
        {87--100}. 
        
        
\bibitem{Jur2007b} \textsc{Jure\v{c}kov\'a, J.}:\ 
        \textit{{Remark on extreme regression quantile II}}, 
        {In: Proceedings of the 56th Session, 
        Section CPM026}, 
        (2007).
        
\bibitem{Jur2016}  \textsc{Jure\v{c}kov\'a, J.}:\
        \textit{{Finite sample behavior of averaged extreme regression quantile}}, 
        {Extremes},
        \textbf{{19}}, (2016),
        {41--49}. 
      
        
\bibitem{Jur2017} \textsc{Jure\v{c}kov\'a, J.}:\
        \textit{{Regression Quantile and Averaged Regression Quantile Processes}}, 
        {In: \textit{Analytical Methods in Statistics}}, J. Antoch et al. (Eds.), 
         {Springer Proceedings in Mathematics and Statistics}, (2017), 
        {53--62}.                         
        
\bibitem{JP2005} \textsc{Jure\v{c}kov\'a, J.} and \textsc{Picek, J.}:\ 
        \textit{{Two-step regression quantiles}}, 
        {{Sankhy$\bar{\rm a}$}}, 
        \textbf{{67}}, (2005), 
        {227--252}. 
        
        
\bibitem{JP2014} \textsc{Jure\v{c}kov\'a, J.} and \textsc{Picek, J.}:\
        \textit{{Averaged regression quantiles}}, 
        {In: \textit{Contemporary Developments in Statistical Theory}}, 
       Lahiri, S. et al. (Eds.),  
        Springer Proceedings in Mathematics and Statistics, \textbf{68}, (2014),  
        {203--216}.
        
\bibitem{JSP2013} \textsc{Jure\v{c}kov\'{a}}, \textsc{Sen, P.K.}, \textsc{Picek, J.}:\
         \textit{{Methodology in Robust and Nonparametric Statistics}}, (2013),
         {CRC Press}. 
 
\bibitem{Knight1999}  
        \textsc{{Knight, K.}}:\  
        \textit{{Limiting distributions for linear programming estimators}}, 
        {Extremes}, 
         \textbf{{4}},  (2001),
        {87--103}. 
      
        
\bibitem{Koenker2005}
        \textsc{{Koenker, R.}}:\ 
         \textit{{Quantile Regression}},  (2005),
        {Cambridge University Press}, 
        {Cambridge, UK}.
       
        
\bibitem{KB1978} 
        \textsc{Koenker, R.} and \textsc{Bassett, G.}:\ 
        \textit{{Regression quantiles}}, 
        {Econometrica}, 
        \textbf{{46}}, (1978), 33--50.
        
        
\bibitem{KB1982}  \textsc{Koenker, R.} and \textsc{Bassett, G.}:\ 
       \textit{ {An empirical quantile function for linear models with iid errors}}, 
         {J. of the American Statist. Assoc.},
        \textbf{{77}},  (1982),
        {407--415}.
        
\bibitem{Koul2000} 
        \textsc{Koul, H. L.}:\ 
         \textit{{Weighted empirical processes in dynamic nonlinear models}}, 
        {Lecture Notes in Statistics},
        \textbf{{166}},  (2000),
        {Springer-Verlag}.
        
\bibitem{KoulSaleh1995} 
       \textsc{ Koul, H. L.} and  \textsc{Saleh, A. K. Md. E.}:\
      \textit{ {Autoregression quantiles and related rank-scores processes}},
        {Ann. Statist.}, 
         \textbf{{23}},  (1995),
        {670--689}. 
        
        
\bibitem{Molak} 
         \textsc{{Molak, V.}} (ed.):\  \textit{{Fundamentals of risk analysis and risk management}}, (1997),
        {CRC Press}. 
        
                        
\bibitem{Pflug2000} 
       \textsc{{Pflug, G.}}:\ 
       \textit{{Some remarks on the value-at-risk and the conditional value-at-risk}}, In: \textit{Probabilistic Constrained Optimization: Methodology and Applications}
        ({Uryasev, S.}, ed.), (2000),
       {Kluwer Academic Publishers}.
        
        
\bibitem{Pollard}
        \textsc{{Pollard, D.}}:\ 
        \textit{{Asymptotics for least absolute deviation regression estimators}}, 
        {Econometric Theory},
         \textbf{{7}}, (1991),
        {186--199}. 
        
        
\bibitem{Portnoy1984} 
        \textsc{{Portnoy, S.}}:\ 
         \textit{{Tightness of the sequence of empiric c.d.f. processes defined from regression fractiles}}, 
        In: \textit{Robust and Nonlinear Time Series Analysis}, 
        Franke, J. et al. eds.,
        Lecture Notes in Statistics, \textbf{26}, (1984),
         {231--245}. 
        
        
\bibitem{Portnoy1991} 
         \textsc{{Portnoy, S.}}:\ 
 \textit{{Asymptotic behavior of the number of regression quantile breakpoints}}, 
         {SIAM J. Sci. Statistical Computing}, 
         \textbf{{12}}, (1991),
       {867--883}.
     
        
\bibitem{Portnoy} 
        \textsc{Portnoy, S.} and  \textsc{Jure\v{c}kov\'a, J.}:\  
         \textit{{On extreme regression quantiles}}, 
        Extremes, 
        \textbf{{2}},  (1999),
        {227--243}. 
        
        
\bibitem{RockUryasev2001} 
        \textsc{Rockafellar, R.T.} and  \textsc{Uryasev, S.}:\  
        \textit{{Conditional Value-at-Risk for general loss distributions}}, 
        {Research report 2001-5, ISE Depart., University of Florida}, 
        ({2001}).
        
\bibitem{RockafellarUryasev2013} \textsc{Rockafellar, R.T.} and  \textsc{Uryasev, S.}:\  
        {Rockafellar, R. T. and  Uryasev, S.},  \textit{{The Fundamental Risk Quadrangle in Risk Management, Optimization
and Statistical Estimation}},  {Surveys in Operations Research and Management Science},  \textbf{{18}}, 
        ({2013}), 
         {33--53}.
        
\bibitem{RockMiranda}  
        \textsc{Rockafellar, R. T.},  \textsc{Royset, J. O.} and \textsc{Miranda, S. I.}:\ 
       \textit{{Superquantile regression with applications to buffered reliability, uncertainty quantification, and conditional value-at-risk}},               
			European Journal of Operational Research,
         \textbf{{234}},  (2014),
        {140--154}. 

        
\bibitem{RC1980} 
        \textsc{Ruppert, D.} and \textsc{R. J. Carroll}:\  
        \textit{{Trimmed least squares estimation in the linear model}}, 
        {J. Amer. Statist. Assoc.},
        \textbf{75},  (1980),
        {828--838}.
      
        
\bibitem{Smith, R.}
        \textsc{{Smith, R.}}:\ 
         \textit{{Nonregular regression}},
        {Biometrika},
        \textbf{{81}},  (1994),
        {173--183}.
        
\bibitem{Tasche} 
        \textsc{{Tasche, D.}}:\  
        \textit{{Conditional expectation as quantile derivative}}, 
        {Working paper, TU M\"{u}nchen}, 
        (2000).
        
\bibitem{Trindade} 
        \textsc{Trindade, A. A. }, \textsc{Uryasev, S.},  \textsc{Shapiro, A.} and \textsc{Zrazhevsky, G}:\ 
        \textit{{Financial prediction with constrained tail risk}}, 
       {Journal of Banking \& Finance}, 
        \textbf{{31}}, (2007), 3524--3538.
       
        
\bibitem{Uryasev2000} 
         \textsc{{Uryasev, S.}}:\ 
       \textit{ {Conditional Value-at-Risk: Optimization Algorithms and Applications}},
        {Financial Engineering News}, 
        \textbf{{2/3}},
        ({2000}).                                                  
 \end{thebibliography}
\end{document}